\renewcommand{\hat}{\widehat}
\renewcommand{\phi}{\varphi}
\newcommand{\spec}{\operatorname{Spec}}
\newcommand{\aut}{\operatorname{Aut}}
\newcommand{\disc}{\operatorname{Disc}}
\newcommand{\gal}{\operatorname{Gal}}
\newcommand{\kgal}{K^\text{gal}}
\newcommand{\af}{\mathbb{A}}
\newcommand{\zz}{\mathbb{Z}}
\newcommand{\qq}{\mathbb{Q}}
\newcommand{\cc}{\mathbb{C}}
\newcommand{\oo}{\mathcal{O}}
\newtheorem{thm}{Theorem}[section]
\newtheorem{lm}[thm]{Lemma}
\theoremstyle{definition}
\newtheorem{defi}[thm]{Definition}
\theoremstyle{remark}
\newtheorem{rem}[thm]{Remark}
\begin{document}

\title{Upper Bounds for the Number of Number Fields with Alternating Galois Group}
\thanks{The authors are grateful for the support of the NSF in funding the Emory 2011 REU. The authors would like to thank our advisor Andy Yang, as well as Ken Ono for their guidance, useful conversations, improving the quality of exposition of this article, and hosting the REU}

\author{Eric Larson}
\address{Department of Mathematics. Harvard University, Cambridge, MA 02138.}
\email{elarson3@gmail.com}

\author{Larry Rolen}
\address{Department of Mathematics and Computer Science, Emory University, Atlanta, GA 30322.}
\email{larry.rolen@mathcs.emory.edu}

\begin{abstract}
We study the number $N(n, A_n, X)$ of number fields of degree $n$ whose
Galois closure has Galois group
$A_n$ and whose discriminant is bounded by $X$.
By a conjecture of Malle, we
expect that $N(n, A_n, X)\sim C_n\cdot X^{\frac{1}{2}} \cdot (\log X)^{b_n}$
for constants $b_n$ and $C_n$.
For $6 \leq n \leq 84393$,
the best known upper bound
is $N(n, A_n, X) \ll X^{\frac{n + 2}{4}}$;
this bound follows from Schmidt's Theorem,
which implies there are $\ll X^{\frac{n + 2}{4}}$
number fields of degree $n$.
(For $n > 84393$, there are better bounds due to
Ellenberg and Venkatesh.)
We show, using the important work of Pila on counting integral points on curves, that $N(n, A_n, X) \ll X^{\frac{n^2 - 2}{4(n - 1)}+\epsilon}$,
thereby improving the best previous exponent by approximately $\frac{1}{4}$ for $6 \leq n \leq 84393$.
\end{abstract}
\maketitle

\section{Introduction and Statement of Results}

For any positive integers $n$ and $X$ and for any fixed transitive permutation group $G$, we would like to count $N(n,G,X)$, defined to be the number of degree $n$ number fields $K$ whose Galois closure has Galois group $G$ and for which $|D_K|\leq X$. Further let $N(n,X)$ denote the number of all degree $n$ number fields with discriminant bounded in absolute value by $X$. It is an old conjecture, sometimes attributed to Linnik, that \[N(n,X)\sim c_nX \quad(n\text{ fixed, }X\rightarrow\infty).\]
The conjecture is trivial when $n=2$, and was proven for $n=3$ by Davenport and Heilbronn \cite{Dav-Heil} and for $n=4,5$ by Bhargava \cite{Bhargava-Quart}, \cite{Bhargava-Quint}. For all but finitely many $n$, the current best upper bound, due to Ellenberg and Venkatesh \cite{Ellenberg-Venk} states:
\[N(n,X)\ll (X\cdot B_n)^{\exp(C\log\sqrt{n})}.\]
Here, $B_n$ depends only on $n$ and $C$ is an absolute constant. For $6 \leq n \leq 84393$, the best bound, due to Schmidt \cite{Schmidt}, is 
\[N(n,X)\ll X^{\frac{n+2}{4}}.\]

In this note, we study the case when $G=A_n$. By a conjecture of Malle \cite{Malle}, we expect that
\[N(n,A_n,X)\buildrel{?}\over{\sim} c(n)\cdot X^{\frac{1}{2}}\cdot (\log X)^{b(n)-1},\] for some constant $c(n)$ and an explicit constant $b(n)$. Here we improve Schmidt's general bound in our case. In particular, we can use Pila's results on counting integral points on geometrically irreducible curves \cite{Pila} to show the following:

\begin{thm} \label{mainthm} We have
\[N(n, A_n, X) \ll X^{\frac{n^2 - 2}{4(n - 1)}}\cdot\log(X)^{2n+1},\]
where the implied constant depends only on $n$.
\end{thm}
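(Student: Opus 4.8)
The plan is to reduce the count to integral points on a family of plane curves and invoke Pila's theorem. Since we want only an upper bound, it suffices to count monic $f(x) = x^n + a_2 x^{n-2} + \cdots + a_{n-1}x + a_n \in \zz[x]$ whose discriminant is a perfect square, with the coefficients suitably bounded. Indeed, every $K$ whose Galois closure has group contained in $A_n$ satisfies $\disc(f) \in \qq^{\times 2}$ for the minimal polynomial $f$ of any generator; and by the standard geometry-of-numbers argument one may choose a generator $\alpha \in \oo_K$ of trace zero (so that $a_1 = 0$) with $|\alpha^{(i)}| \ll_n X^{1/(2(n-1))} =: B$, whence $|a_i| \ll_n B^i$ for all $i$. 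As each field contributes at least one such $f$, we obtain $N(n,A_n,X) \ll \#\{f : |a_i| \ll_n B^i,\ \disc(f)\text{ a perfect square}\}$.

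Next I would fix the tuple $(a_2,\dots,a_{n-1})$ --- there are $\ll B^{\sum_{i=2}^{n-1} i} = B^{(n^2-n-2)/2}$ admissible choices --- and view the surviving condition as the affine curve $C : y^2 = g(a_n)$ in the $(a_n,y)$-plane, where $g(a_n) := \disc(f)$ is, by weighted homogeneity, a polynomial of degree $n-1$ in $a_n$ with nonzero leading coefficient. The integral points to be counted satisfy $\max(|a_n|,|y|) \ll B^{n(n-1)/2} =: N$, since $|\disc(f)| \ll B^{n(n-1)}$. When $C$ is geometrically irreducible, Pila's theorem bounds the number of such points by $\ll N^{1/(n-1)}(\log N)^{O_n(1)} = B^{n/2}(\log X)^{O_n(1)}$. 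Summing over the slices gives
\[
B^{(n^2-n-2)/2}\cdot B^{n/2}(\log X)^{O_n(1)} \;=\; B^{(n^2-2)/2}(\log X)^{O_n(1)} \;=\; X^{\frac{n^2-2}{4(n-1)}}(\log X)^{O_n(1)},
\]
and the precise exponent $2n+1$ then follows by inserting the explicit log-power form of Pila's estimate and summing dyadically over the range of $|D_K|$.

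\textbf{The main obstacle} is justifying the geometric irreducibility used above and disposing of the slices where it fails. Writing $f_0 = f - a_n$ and letting $\rho_1,\dots,\rho_{n-1}$ be the roots of $f'$, the resultant formula gives $g(a_n) = c\prod_{k=1}^{n-1}(a_n + f_0(\rho_k))$ with $c \ne 0$, so $C$ is geometrically irreducible unless $g$ is a constant times a perfect square, i.e.\ unless the critical values $f_0(\rho_k)$ coincide in pairs. This is impossible when $n$ is even (then $\deg_{a_n} g = n-1$ is odd), and for $n$ odd it restricts $(a_2,\dots,a_{n-1})$ to a proper Zariski-closed ``bad'' locus $\mathcal{B}$. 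On $\mathcal{B}$ the curve degenerates and can carry up to $\ll B^n$ points per slice --- well beyond the $B^{n/2}$ budget --- so the argument only closes if $\mathcal{B}$ has large enough weighted codimension that $\#(\mathcal{B}\cap\text{box}) \ll B^{(n^2-n-2)/2-n}$. Proving this codimension estimate, and checking that the ``critical values pair up'' condition genuinely constrains the high-weight coefficients $a_i$, is the delicate point I expect to demand the most work.
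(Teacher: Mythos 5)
Your reduction (trace-zero integral generator from Minkowski theory, primitivity from the $A_n$ condition, passage to monic integral polynomials with square discriminant in the weighted box $|a_i| \ll B^i$) and your handling of even $n$ coincide with the paper's Sections 2 and 3, and that half is correct and complete: the slice count $B^{(n^2-n-2)/2}$, the per-slice Pila bound with $N = B^{n(n-1)/2} = X^{n/4}$ and degree $d = n-1$, and the resulting $X^{\frac{n^2-2}{4(n-1)}}(\log X)^{2n+1}$ all match (the log exponent $2n+1$ is just $2d+3$; no dyadic summation over $|D_K|$ is needed).

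The odd case, however, is where you stop, and what you defer is a genuine gap rather than a technicality: you reduce the theorem to an unproven estimate on the bad locus $\mathcal{B}$ (the slices $(a_2,\dots,a_{n-1})$ where $\disc$, as a polynomial in $a_n$, is a perfect square), and this estimate is the actual content of the odd case. Note that ``proper Zariski-closed,'' which is all your resultant computation yields, is far from enough: if $\mathcal{B}$ had codimension one, its intersection with the box could have $\approx B^{(n^2-n-2)/2-2}$ points, and multiplying by the trivial bound $B^n$ per bad slice overshoots the target $B^{(n^2-2)/2}$ by roughly $B^{(n-4)/2}$. The paper's Section 4 proves precisely the needed smallness. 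First (Lemma~\ref{fincrit}), by covering-space/monodromy theory, only finitely many equivalence classes of degree-$n$ polynomials have all critical values inside a fixed finite set; hence (Lemma~\ref{findisc}) the map $\af^{n-2}\to\af^{n-1}$ sending $(s_2,\dots,s_{n-1})$ to the coefficients of $y\mapsto\disc(t^n+s_2t^{n-2}+\cdots\pm s_{n-1}t-y)$ is finite; hence (Lemma~\ref{lmdim}) $\mathcal{B}$, being the preimage under this finite map of the locus of perfect-square monic polynomials of degree $n-1$ (which has dimension $\frac{n-1}{2}$), itself has dimension at most $\frac{n-1}{2}$. The union of the bad fibers then has dimension at most $\frac{n+1}{2}$, and its integral points in the weighted box are bounded by the product of the $\frac{n+1}{2}$ largest sides, namely $\prod_{j=(n+1)/2}^{n}B^j = B^{(3n+1)(n+1)/8}$, which is at most $B^{(n^2-2)/2}$ exactly when $n^2-4n-9\ge 0$, i.e.\ $n\ge 7$. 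That inequality exposes a second omission in your sketch: for $n=3$ and $n=5$ even the completed dimension argument fails, and the paper instead quotes Wright and Bhargava for those cases. So your plan is the right one, but the two missing ingredients---the finiteness-of-critical-values argument giving $\dim\mathcal{B}\le\frac{n-1}{2}$, and the separate treatment of $n=3,5$---are exactly where the paper does its work.
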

\begin{rem}
Throughout this note, we write $f \ll g$ to mean that
$f \leq c \cdot g$ for a constant $c$ depending only
on the degree of number field (or degree of the algebraic
variety) in question.
\end{rem}
\indent
Note that the exponent improves on the previous record by a power of $X$ of about $\frac{1}{4}$ for these $n$. The method uses point counting on varieties in a similar manner as in \cite{Ellenberg-Venk}. The improvement follows from viewing these varieties as fibrations of curves, controlling the fibers which are not geometrically irreducible, and using a bound of Pila on counting integral points on geometrically irreducible curves.
\section{Upper Bounds via Point Counting \label{sec:varieties}}

If $K$ is a number field  of discriminant $D_K$ and degree $n$,
then Minkowski theory implies there is an element
$\alpha \in \oo_K$ of trace zero with
\[|\alpha| \ll D_K^{\frac{1}{2(n - 1)}} \quad \text{(under any archimedian valuation),}\]
where the implied constant depends only on $n$.

When $\gal(\kgal / \qq) \simeq A_n$, then $K$ must be
a primitive extension of $\qq$,
so $K = \qq(\alpha)$ and the characteristic polynomial
of $\alpha$ will determine $K$.
One can use this to give an upper bound on $N(n, A_n, X)$.
To see this, note that every pair $(K, \alpha)$ as above gives a $\zz$-point of
$\spec R$, for
\[R = \zz[x_1, x_2, \ldots, x_n]^{A_n} / (s_1) \quad \text{where} \quad s_1 = x_1 + x_2 + \cdots + x_n.\]
(Here $\zz[x_1,x_2,\ldots,x_n]^{A_n}$
denotes the ring of $A_n$-invariants in $\zz[x_1,x_2,\ldots,x_n]$.)
Now, it is a classical theorem that the ring of $A_n$-invariant functions
is generated by the symmetric functions and the square root
of the discriminant, i.e.\ we have
\[\zz[x_1, x_2, \ldots, x_n]^{A_n} \simeq \zz[s_1, s_2, \ldots, s_n, D] / \big(D^2 = \disc(t^n - s_1 t^{n - 1} + \cdots \pm s_n)\big),\]
so therefore
\[R \simeq \zz[s_1, s_2, \ldots, s_n, D] / \big(D^2 = \disc(t^n + s_2 t^{n - 2} + \cdots \pm s_n)\big).\]

Thus, to give an upper bound on $N(n, A_n, X)$, it suffices
to bound the number of $\zz$-points of $\spec R$ which satisfy the inequalities
\begin{equation} \label{bounds}
|s_j| \ll X^{\frac{j}{2(n - 1)}} \quad \text{and} \quad |D| \ll X^{\frac{n}{4}}.
\end{equation}

\section{Proof of Theorem~\ref{mainthm} when $n$ is Even \label{sec:even}}

When $n$ is even, Theorem~\ref{mainthm} is
relatively straight-forward; therefore, we begin by
examining this case.

\begin{lm}\label{evencase} Theorem~\ref{mainthm} holds when $n$ is even.
\end{lm}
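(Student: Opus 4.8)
The plan is to start from the reduction established in Section~\ref{sec:varieties}: it suffices to bound the number of $\zz$-points $(s_2,\dots,s_n,D)$ of $\spec R$ lying in the box~(\ref{bounds}). I would realize $\spec R$ as a family of plane curves by fibering over the coordinates $(s_2,\dots,s_{n-1})$, treating $(s_n,D)$ as the coordinates on the fibers. For a fixed base point $\underline{s}=(s_2,\dots,s_{n-1})$, the fiber is the affine plane curve
\[
C_{\underline{s}}\colon\ D^2 = g_{\underline{s}}(s_n), \qquad g_{\underline{s}}(s_n) := \disc\bigl(t^n + s_2 t^{n-2} + \cdots \pm s_n\bigr),
\]
where $g_{\underline{s}}$ is viewed as a polynomial in the single variable $s_n$. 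A weighted-homogeneity count (assign weight $j$ to $s_j$, so that $\disc$ is isobaric of weight $n(n-1)$) shows that $g_{\underline{s}}$ has degree exactly $n-1$ in $s_n$, with leading coefficient the nonzero constant $\pm n^n$ independent of $\underline{s}$; hence each $C_{\underline{s}}$ is a plane curve of degree $n-1$.

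The decisive observation, and the reason the even case is easy, is the following parity argument. Since $n$ is even, $n-1$ is odd, so the degree-$(n-1)$ polynomial $g_{\underline{s}}$ can never be a perfect square in $\bar{\qq}[s_n]$; equivalently $g_{\underline{s}}$ is not a square, so $D^2 - g_{\underline{s}}(s_n)$ is irreducible over $\bar{\qq}$ and $C_{\underline{s}}$ is geometrically irreducible for \emph{every} base point $\underline{s}$. Consequently there is no exceptional locus to handle by hand, and Pila's bound \cite{Pila} applies to every fiber with an implied constant depending only on the degree $n-1$. This is precisely the step that fails when $n$ is odd: there $n-1$ is even, $g_{\underline{s}}$ can degenerate into a perfect square, the corresponding fiber splits into two components, and such fibers must be counted separately. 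Controlling those reducible fibers is the genuine obstacle of the paper, and its automatic absence here is the whole content of the phrase ``relatively straightforward.''

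To conclude I would count fiber by fiber. Every $\zz$-point satisfying~(\ref{bounds}) has $|s_n|\le X^{\frac{n}{2(n-1)}}\le X^{\frac{n}{4}}$ (using $\tfrac{n}{2(n-1)}\le\tfrac n4$ for $n\ge 3$) and $|D|\le X^{\frac{n}{4}}$, so it lies in the square box of side $B=X^{\frac{n}{4}}$. Pila's theorem then bounds the number of integral points on each geometrically irreducible curve $C_{\underline{s}}$ of degree $n-1$ in this box by $\ll B^{\frac{1}{n-1}+\epsilon}=X^{\frac{n}{4(n-1)}+\epsilon}$, uniformly in $\underline{s}$. The number of admissible base points is at most
\[
\prod_{j=2}^{n-1} X^{\frac{j}{2(n-1)}} = X^{\frac{1}{2(n-1)}\left(\frac{n(n-1)}{2}-1\right)} = X^{\frac{n^2-n-2}{4(n-1)}}.
\]
Multiplying the two contributions gives
\[
N(n,A_n,X) \ll X^{\frac{n^2-n-2}{4(n-1)}}\cdot X^{\frac{n}{4(n-1)}+\epsilon} = X^{\frac{n^2-2}{4(n-1)}+\epsilon},
\]
which is the claimed power of $X$. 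Finally I would replace the $X^{\epsilon}$ by the stated factor $\log(X)^{2n+1}$ by invoking the explicit polylogarithmic form of Pila's estimate in place of the $\epsilon$-version and tracking these factors through the base variables and the fiber count; this is routine bookkeeping that does not affect the exponent, which is the entire point of the argument.
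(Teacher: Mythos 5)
Your proposal is correct and follows essentially the same route as the paper: fibering $\spec R$ over $(s_2,\dots,s_{n-1})$, observing that for $n$ even each fiber $D^2 = \disc(t^n + s_2 t^{n-2} + \cdots \pm s_n)$ is a curve of odd degree $n-1$ in $s_n$ and hence geometrically irreducible, applying Pila's bound with $B = X^{\frac{n}{4}}$ on each fiber, and multiplying by the count of base points in the box. Your added details (the isobaric-weight computation of the leading coefficient $\pm n^n$ and the explicit parity argument) only make explicit what the paper leaves implicit, and the final substitution of the polylogarithmic form of Pila's estimate for the $\epsilon$-version recovers the stated $(\log X)^{2n+1}$ factor exactly as in the paper.
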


\begin{proof}
By fixing $s_2, s_3, \ldots, s_{n - 1}$, we can view $\spec R$ as
a fibration of plane curves over $\af^{n - 2}$.
Each of these curves is then the zero locus of a polynomial
of the form
\begin{equation} \label{curveeven}
D^2 = \text{a polynomial of odd degree in $s_n$}.
\end{equation}

In particular, these curves are geometrically irreducible.
Therefore, we can apply Pila's bound \cite{Pila},
which states that the number of integral points on a
geometrically irreducible plane curve of degree $d$ whose coordinates
are bounded in absolute value by $B$ is at most
\[(3d)^{4d + 8} \cdot B^{\frac{1}{d}} \cdot (\log B)^{2d + 3} \ll B^{\frac{1}{d}} \cdot (\log B)^{2d + 3}.\]
For the curves defined by \eqref{curveeven}, we seek to count
integral points with
\[|s_n| \ll X^{\frac{n}{2(n - 1)}} \quad \text{and} |D| \ll X^{\frac{n}{4}}.\]
By Pila's result above, the number of such points is
\[\ll \left(X^{\frac{n}{4}}\right)^{\frac{1}{n - 1}} \cdot \left(\log (X^{\frac{n}{4}})\right)^{2 \cdot (n - 1) + 3} \ll X^{\frac{n}{4(n - 1)}} \cdot (\log X)^{(2n + 1)}.\]
Therefore, using the bounds \eqref{bounds} on the other $s_j$
from the previous section, we have
\begin{align*}
N(n, A_n, X) &\ll \left(\prod_{j = 2}^{n - 1} X^{\frac{j}{2(n - 1)}}\right) \cdot X^{\frac{n}{4(n - 1)}} \cdot (\log X)^{2n + 1} = X^{\frac{n^2 - 2}{4(n - 1)}} \cdot (\log X)^{2n + 1}. && \qedhere
\end{align*}
\end{proof}

\section{Proof of Theorem~\ref{mainthm} when $n$ is Odd}

In the case when $n$ is odd, the argument
of Lemma~\ref{evencase} breaks down because the
curves in the fibration do not have to be geometrically
irreducible. In order to circumvent this difficulty, we
will show in this section that ``most'' of the fibers
of the map $\spec R \to \af^{n - 2}$ are geometrically
irreducible.  We will then bound the number of integral points
on the fibers that fail to be geometrically irreducible.

\begin{defi} We say two polynomials $f, g \in \cc[z]$
are \emph{equivalent} if $f(z) = g(az + b)$ for some $a \in \cc^\times$
and $b \in \cc$.
\end{defi}

\begin{defi} We say that $c$ is a \emph{critical value}
of a polynomial $f$ if $c = f(d)$ for some $d$ with $f'(d) = 0$.
\end{defi}

\begin{lm} \label{fincrit}
Fix a finite set of points $S \subset \cc$
and an integer $d$. Then there are finitely many equivalence
classes of polynomials of degree $d$
whose set of critical values is contained in $S$.
\end{lm}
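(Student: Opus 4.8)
The plan is to reduce the lemma to a rigidity statement about algebraic \emph{families} of polynomials and then kill such families by a differentiation-and-degree count. The first thing I would record is that the set of critical values is an \emph{invariant} of the equivalence class: if $f(z) = g(az + b)$, then the critical points of $f$ are exactly the $a^{-1}(\theta - b)$ for $\theta$ a critical point of $g$, and $f(a^{-1}(\theta - b)) = g(\theta)$, so $f$ and $g$ have precisely the same critical values. Using the two parameters $a, b$, every equivalence class contains a \emph{monic, depressed} representative $g(w) = w^d + e_{d-2}w^{d-2} + \cdots + e_0$: scale by $a$ to force the leading coefficient to be $1$, then translate by $b$ to kill the coefficient of $w^{d-1}$. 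The only remaining freedom is $w \mapsto \zeta w$ with $\zeta^d = 1$, a finite ambiguity. Hence it suffices to prove that there are finitely many monic depressed degree-$d$ polynomials whose critical values lie in $S$.

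Next I would observe that this set is Zariski closed in the coefficient space $\af^{d-1}$ with coordinates $e_{d-2}, \dots, e_0$. Indeed, the monic degree-$(d-1)$ polynomial $\Phi_g(y) = \prod_{\theta:\, g'(\theta) = 0}(y - g(\theta))$ has coefficients symmetric in the critical points, hence polynomial in the $e_j$, and its roots are exactly the critical values; the condition ``critical values $\subseteq S$'' is the finite union, over all $P(y) = \prod_{s \in S}(y - s)^{m_s}$ with $\sum_s m_s = d - 1$, of the closed conditions $\Phi_g = P$. If this set were infinite it would contain an irreducible curve, yielding a nonconstant algebraic family $g_t$ over an irreducible base $T$ with $\Phi_{g_t}$ \emph{constant}, i.e. with a fixed multiset of critical values. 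After passing to a finite cover of $T$ I may assume the critical points $\theta_1(t), \theta_2(t), \dots$ are single-valued, and by continuity on the connected base each critical value $g_t(\theta_j(t))$ is then a constant $v_j$, a fixed root of $P$.

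Finally, differentiating $g_t(\theta_j(t)) \equiv v_j$ in $t$ and using $g_t'(\theta_j(t)) = 0$ gives $(\partial_t g_t)(\theta_j(t)) = 0$; more precisely, factoring $g_t(w) - v_j = (w - \theta_j(t))^{m_j + 1}u_t(w)$ with $u_t(\theta_j) \neq 0$, where $m_j$ is the multiplicity of $\theta_j$ as a root of $g_t'$, and differentiating shows that $\partial_t g_t$ vanishes at $w = \theta_j(t)$ to order at least $m_j$. Summing over the distinct critical points, $\partial_t g_t$ has zeros of total order at least $\sum_j m_j = \deg g_t' = d - 1$. But because $g_t$ is monic and depressed, the coefficients of $w^d$ and $w^{d-1}$ are constant in $t$, so $\partial_t g_t$ has $w$-degree at most $d - 2$; a nonzero polynomial of degree $\leq d - 2$ cannot have $d - 1$ zeros, whence $\partial_t g_t \equiv 0$ and the family is constant, a contradiction. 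I expect the main obstacle to be exactly this degree bookkeeping: the drop from $d - 1$ to $d - 2$ is precisely what the monic and depressed normalizations buy (translation alone already produces nonconstant monic families with fixed critical values), and the careful accounting of the vanishing order $\geq m_j$ at \emph{multiple} critical points is the delicate point that makes the count close.
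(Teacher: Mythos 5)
Your proof is correct, but it takes a genuinely different route from the paper's. The paper argues topologically: a degree-$d$ polynomial $f$ with critical values in $S$ restricts to an unramified covering of $\cc - S$, hence determines a monodromy homomorphism $f^* \colon \pi_1(\cc - S) \to S_d$; there are finitely many such homomorphisms since $\pi_1(\cc - S)$ is finitely generated and $S_d$ is finite, and two polynomials with the same monodromy differ by a deck transformation, which is an automorphism of $\hat{\cc}$ fixing $\infty$ and hence affine, so each monodromy datum corresponds to a single equivalence class. You instead give an algebraic deformation-rigidity argument: normalize each class to a monic depressed representative (leaving only the finite ambiguity $w \mapsto \zeta w$ with $\zeta^d = 1$), show the relevant locus is Zariski closed in the coefficient space $\af^{d-1}$ via the critical-value polynomial $\Phi_g$, and rule out a positive-dimensional component by differentiating $g_t(\theta_j(t)) \equiv v_j$ along a putative curve of solutions: the multiplicity bookkeeping gives $\partial_t g_t$ at least $d-1$ zeros in $w$, while the normalization caps its $w$-degree at $d-2$, forcing the family to be constant. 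Both proofs lean on standard facts invoked without detail --- the paper on the classification of covering spaces, yours on passing to a finite irreducible cover and a dense open subset of the base so that the $\theta_j(t)$ are single-valued with constant multiplicities and so that the critical values, being continuous functions into the finite set of roots of $P$, are constant; you should state that restriction to a dense open explicitly, though it is harmless since a nonconstant family remains nonconstant there. As for what each approach buys: the paper's proof is shorter and yields an explicit count (at most $(d!)^{|S|}$ classes, since $\pi_1(\cc - S)$ is free on $|S|$ generators), while yours avoids covering-space theory entirely, works verbatim over any algebraically closed field of characteristic zero, and makes transparent exactly what the normalization is for, as your example of the translation family $(w+t)^d$ shows that monicity alone would not suffice.
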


\begin{proof} Write $S = \{z_1, z_2, \ldots, z_n\}$,
and fix some $z_0 \notin S$. Then any polynomial of degree
$d$ whose set of critical values $f$ is contained in $S$
gives rise to a map
\[f^* \colon \pi_1(\cc - S) \to \aut(f^{-1}(z_0)) \simeq S_d.\]
Since $S$ is finite, $\pi_1(\cc - S)$ is finitely generated;
moreover, $S_d$ is finite, so
there are only finitely many possibilities for $f^*$.

Thus, it suffices to show that
any two polynomials $f$ and $g$ for which $f^* = g^*$
are equivalent. But the classical theory of covering spaces
implies that when $f^* = g^*$, then $f$ and $g$ must differ by
a deck transformation, which must be analytic because
$f$ and $g$ are analytic coverings.
The desired conclusion then follows from the well-known
fact that any automorphism
of $\hat{\cc}$ fixing $\infty$ is of the form
$z \mapsto a \cdot z + b$ with $a \in \cc^\times$
and $b \in \cc$.
\end{proof}

\begin{lm} \label{findisc} Let $n$ be an integer.
For any monic polynomial $p(z) \in \cc[z]$ of degree $n - 1$,
there are only finitely
many values of $(a_2, a_3, \cdots, a_{n - 1}) \in \cc^{n - 2}$
such that $p(z)$ is the discriminant of the polynomial
\[q(t) = t^n + a_2 t^{n - 2} + \cdots + a_{n - 1} t - z.\]
\end{lm}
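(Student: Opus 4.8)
The plan is to reduce the statement to Lemma~\ref{fincrit} by translating the discriminant condition into a condition on critical values. First I would isolate the variable $z$: since $z$ enters $q$ only through the constant term, setting $f(t) = t^n + a_2 t^{n-2} + \cdots + a_{n-1}t$ gives $q(t) = f(t) - z$, so the roots of $q$ are exactly the solutions of $f(t) = z$. Then $\disc_t(q)$, regarded as a polynomial in $z$, vanishes precisely when $f(t) = z$ has a repeated root, that is, when some $t_0$ satisfies $f(t_0) = z$ and $f'(t_0) = 0$; equivalently, the zeros of $\disc_t(q)(z)$ are precisely the critical values of $f$. (A short resultant computation makes this explicit: $\disc_t(q) = \pm n^n\prod_{f'(\gamma)=0}(z - f(\gamma))$, which has degree $n-1$ in $z$ and leading coefficient independent of the $a_i$, matching the degree of $p$.)

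Granting this, suppose $(a_2,\ldots,a_{n-1})$ satisfies $\disc_t(q) = p$. Because $p$ is fixed of degree $n - 1$, its zero set $S$ is a fixed finite subset of $\cc$, and the observation above shows that every critical value of $f$ lies in $S$. Thus $f$ ranges over monic polynomials of degree $n$ whose critical values lie in the fixed finite set $S$, and Lemma~\ref{fincrit} (applied with $d = n$) tells me these fall into finitely many equivalence classes.

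It then remains to count, within a single equivalence class, the polynomials $f$ of the special shape forced here — monic, with vanishing coefficient on $t^{n-1}$ and vanishing constant term. Fixing a representative $g$, any equivalent $f$ has the form $f(t) = g(at+b)$ for some $a \in \cc^\times$ and $b \in \cc$; comparing the $t^n$-coefficients gives $a^n\operatorname{lc}(g) = 1$, leaving at most $n$ choices of $a$, and then requiring the $t^{n-1}$-coefficient of $f$ to vanish pins down $b$ uniquely. So each class contributes at most $n$ admissible $f$. Since there are finitely many classes, only finitely many such $f$ occur, hence only finitely many tuples $(a_2,\ldots,a_{n-1})$, these being just the coefficients of $f$.

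I expect the main obstacle to be the first step — recognizing that the discriminant of $q$ in the variable $z$ is governed by the critical values of the auxiliary polynomial $f$, which is exactly what converts the problem into the hypothesis of Lemma~\ref{fincrit}. Once that dictionary is in place, the count of normalized representatives inside each equivalence class is an elementary comparison of coefficients.
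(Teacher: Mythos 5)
Your proposal is correct and takes essentially the same route as the paper: pass to the auxiliary polynomial $f(t) = t^n + a_2 t^{n-2} + \cdots + a_{n-1}t$, observe that the zeros of $\disc_t(q)$ as a polynomial in $z$ are exactly the critical values of $f$, and invoke Lemma~\ref{fincrit}. In fact you supply two details the paper leaves implicit --- the explicit resultant identity, and the count of at most $n$ normalized representatives (monic, vanishing $t^{n-1}$-coefficient) within each equivalence class, which is genuinely needed since each class is an infinite two-parameter family.
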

\begin{proof}
In order for $p(z)$ to be the discriminant of $q(t)$,
every root $r$ of $p(z)$ must be (with multiplicity)
a critical value of the polynomial
$q_0(t) = t^n + a_1 t^{n - 1} + \cdots + a_{n - 1} t$.
Since $q_0$ is a polynomial of degree $n$, it has $n - 1$
critical values (counted with multiplicity); since $p(z)$ is a
polynomial of degree of $n - 1$, it has $n - 1$ zeros
(counted with multiplicity). Therefore, every critical
value of $q_0$ is a root of $p(z)$.
This completes the proof by Lemma~\ref{fincrit}.
\end{proof}

\begin{lm} \label{lmdim}
The locus of $(s_2, s_3, \ldots, s_{n - 1}) \in \af^{n - 2}$
such that the plane curve
\[x^2 = \disc(t^n + s_2 t^{n - 2} + \cdots \pm s_{n - 1} t - y)\]
fails to be geometrically irreducible is
an affine variety of dimension at most $\frac{n - 1}{2}$.
\end{lm}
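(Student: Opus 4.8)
The plan is to characterize the ``bad locus'' explicitly as the pullback of the set of perfect squares under the discriminant map, and then to bound its dimension by pushing it forward, exploiting the finiteness of fibers supplied by Lemma~\ref{findisc}. First I would record the elementary reducibility criterion for a curve of the form $x^2 = p(y)$. Writing
\[
p_{(s)}(y) = \disc(t^n + s_2 t^{n - 2} + \cdots \pm s_{n - 1} t - y) \in \cc[y],
\]
the polynomial $x^2 - p_{(s)}(y)$ is monic in $x$, so by Gauss's lemma it is reducible in $\cc[x, y]$ exactly when it is reducible in $\cc(y)[x]$, i.e.\ when $p_{(s)}$ is a square in $\cc(y)$; since $\cc[y]$ is a UFD and $\cc$ is algebraically closed, this happens if and only if $p_{(s)}$ is a perfect square in $\cc[y]$. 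Thus the curve fails to be geometrically irreducible precisely when $p_{(s)}$ is a perfect square. (This is automatically impossible when $\deg_y p_{(s)} = n - 1$ is odd, recovering the even-$n$ situation of Lemma~\ref{evencase}; here $n$ is odd, so $n - 1$ is even and the condition is genuine.)

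Next I would package the fibration as a morphism. The computation underlying Lemma~\ref{findisc} shows $p_{(s)}(y)$ has degree exactly $n - 1$ in $y$ with a fixed nonzero leading coefficient $\pm n^n$, independent of $(s_2, \ldots, s_{n - 1})$. Hence sending $(s_2, \ldots, s_{n - 1})$ to the tuple of the remaining coefficients of $p_{(s)}$ defines a morphism
\[
\Phi \colon \af^{n - 2} \longrightarrow \af^{n - 1},
\]
whose target is identified with the space of degree-$(n - 1)$ polynomials with leading coefficient $\pm n^n$. Lemma~\ref{findisc} is exactly the assertion that every fiber of $\Phi$ is finite. The bad locus of Lemma~\ref{lmdim} is then $\Phi^{-1}(P)$, where $P \subset \af^{n - 1}$ is the set of perfect squares; since a square $r(y)^2$ is recovered from the top $\frac{n - 1}{2}$ coefficients of the target by a polynomial recursion, $P$ is cut out by the vanishing of the resulting polynomial relations among the lower coefficients, so $P$ is Zariski closed and $\Phi^{-1}(P)$ is an affine variety.

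It remains to bound the dimension. A polynomial of degree $n - 1$ with leading coefficient $\pm n^n$ is a perfect square if and only if it equals $r(y)^2$ for a polynomial $r$ of degree $\frac{n - 1}{2}$ with a prescribed (up to sign) leading coefficient; such $r$ form an affine space of dimension $\frac{n - 1}{2}$, and the squaring map onto $P$ is injective with polynomial inverse, so $\dim P = \frac{n - 1}{2}$. Applying the fiber-dimension theorem to each irreducible component of $\Phi^{-1}(P)$ and using that $\Phi$ has finite fibers, the dimension of the source is at most that of the image, so
\[
\dim \Phi^{-1}(P) \;\le\; \dim P \;=\; \frac{n - 1}{2},
\]
which is the claim.

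The step I expect to require the most care is the reduction to the finite-fiber statement. One must confirm that ``failing to be geometrically irreducible'' is genuinely equivalent to the perfect-square condition, so that no reducible or non-reduced fiber is overlooked, and that the leading coefficient of $\disc$ in $y$ is the nonzero constant $\pm n^n$ for \emph{every} value of $(s_2, \ldots, s_{n - 1})$, so that $\Phi$ really maps into a fixed affine space on which Lemma~\ref{findisc} reads as the finiteness of fibers. Once these points are secured, the remainder is a dimension count whose only quantitative input is that degree-$(n - 1)$ squares depend on $\frac{n - 1}{2}$ parameters, together with the fact that finiteness of fibers prevents this dimension from growing under $\Phi$.
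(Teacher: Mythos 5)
Your proposal is correct and follows essentially the same route as the paper: identify the bad locus with the preimage of the perfect-square locus under the coefficient map $\af^{n-2} \to \af^{n-1}$, invoke Lemma~\ref{findisc} for finiteness of fibers, and bound the dimension by that of the square locus, which is $\frac{n-1}{2}$. The only differences are that you fill in details the paper leaves implicit (the Gauss's-lemma argument for the reducibility criterion, the closedness and dimension count for the square locus, and the observation that quasi-finiteness of $\Phi$ already suffices for the dimension inequality), which is a refinement rather than a different approach.
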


\begin{proof} 
The corresponding plane curve fails to be geometrically
irreducible if and only if the polynomial
\[p(y) = \disc(t^n + s_2 t^{n - 2} + \cdots \pm s_{n - 1} t - y)\]
is a perfect square. But the coefficients of $p(y)$
are regular functions in $s_2, s_3, \ldots, s_{n - 1}$.
Moreover, the map $\af^{n - 2} \to \af^{n - 1}$ induced
by these regular functions is a finite map by Lemma~\ref{findisc}.

Since the locus of $(b_1, b_2, \ldots, b_{n - 1}) \in \af^{n - 1}$
such that $t^{n - 1} + b_1 t^{n - 2} + \cdots + b_{n - 1}$
is a perfect square is a Zariski-closed set of dimension
$\frac{n - 1}{2}$, this completes the proof.
\end{proof}

Using the above lemma together with the ideas
from Section~\ref{sec:even}, we can complete the proof
of Theorem~\ref{mainthm}.

\begin{proof}[Proof of Theorem~\ref{mainthm} for $n$ odd]
When $n = 3$, this follows from a result of Wright \cite{Wright},
and when $n = 5$, this follows from a result of Bhargava \cite{Bhargava-Quint}.
Thus, we can assume $n \geq 7$.

Again, we consider the fibration $\spec R \to \af^{n - 2}$
given by fixing $s_2, s_3, \ldots, s_{n - 1}$.
The argument given in Lemma~\ref{evencase} implies
the number of integral points lying on the geometrically
irreducible fibers satisfies the required bound;
it remains to see that the number of integral
points lying on the geometrically reducible fibers
also satisfies the required bound.

To prove this, we first note that by Lemma~\ref{lmdim},
all such points are contained in a subvariety of $\spec R$
of dimension at most $\frac{n - 1}{2} + 1 = \frac{n + 1}{2}$.
Moreover, the projection map $\spec R \to \af^{n - 1}$
given by fixing $s_2, s_3, \ldots, s_n$ is finite,
so it suffices to bound the number of integral points
in the box
\[|s_j| \leq X^{\frac{j}{2(n - 1)}}\]
lying in a particular affine variety
of dimension $\frac{n + 1}{2}$. But the number of such
points can be bounded by the product of the $\frac{n + 1}{2}$
largest sides of the box, and therefore is
\[\ll \prod_{j = \frac{n + 1}{2}}^n X^{\frac{j}{2(n - 1)}} = X^{\frac{(3n + 1)(n + 1)}{16(n - 1)}},\]
and therefore satisfies the required bound,
as long as $n \geq 7$.
\end{proof}

\end{document}